\newtheorem{Le}{Lemma}[section]
\newtheorem{Th}[Le]{Theorem}
\theoremstyle{definition}
\theoremstyle{remark}
\newtheorem{Rem}[Le]{Remark}
\begin{document}

\title[Non-symplectic automorphisms and Calabi-Yau orbifolds]{K3 surfaces with non-symplectic 
automorphisms of order three and Calabi-Yau orbifolds}
\author{Frank Reidegeld}

\maketitle

\begin{abstract}
Let $S$ be a K3 surface that admits a non-symplectic automorphism
$\rho$ of order $3$. We divide $S\times \mathbb{P}^1$ by $\rho\times\psi$
where $\psi$ is an automorphism of order $3$ of $\mathbb{P}^1$. There exists
a threefold ramified cover of a partial crepant resolution of the quotient that
is a Calabi-Yau or\-bi\-fold. We compute the Euler characteristic of our 
examples and obtain values ranging from $30$ to $219$. 
\end{abstract}

\section{Introduction}

The motivation for this article are various constructions of Calabi-Yau
ma\-nifolds that involve non-symplectic automorphisms of K3 surfaces.
An automorphism of a K3 surface $S$ is called non-symplectic if it
acts non-trivially on $H^{2,0}(S)$. The oldest example of such a
construction was proposed by Borcea \cite{Bor} and Voisin \cite{Voi}. 
Their idea is to divide the product of $S$ and an elliptic curve $E$ 
by the product $\rho\times\psi$ of two involutions, where $\rho$
shall be non-symplectic. The quotient $(S\times E)/(\rho\times\psi)$ 
is an orbifold whose singularities are the product of ordinary double 
points and the curves in the fixed locus of $\rho$. These singularities 
can be blown up and the authors obtain smooth Calabi-Yau threefolds.

It is a natural question if it is possible to construct Calabi-Yau manifolds
with help of non-symplectic automorphisms of higher order. These 
automorphisms are classified in \cite{ArSa,Taki} for order $3$
and for arbitrary prime order in \cite{ArSaTa}. Rohde \cite{Rohde1,Rohde2}
and Dillies \cite{Dillies} divide a product of a K3 surface and an elliptic
curve by a cyclic group that is generated by $\rho\times\psi$ where $\rho$ 
and $\psi$ are of order $3$. The crepant 
resolutions of those quotients are smooth Calabi-Yau manifolds. 
Garbagnati \cite{Garbagnati} obtains Calabi-Yau threefolds by
dividing a suitable product of a K3 surface and an elliptic curve by an
automorphism of order $4$. Further examples where
$\rho\times\psi$ is of order $4$ or $6$ can be found in \cite{CG}.
Since an automorphism of an elliptic curve has order $2$, $3$, $4$
or $6$, there are no further possible values of the order of $\rho\times
\psi$.  

Kovalev and Lee \cite{KoLe} replace $E$ by the Riemann
sphere $\mathbb{P}^1$ and $\psi$ by the map $\psi(z):=-z$. The
quotient $(S\times\mathbb{P}^1)/(\rho\times\psi)$, where $\rho$
is a non-symplectic involution, is an orbifold whose singularities
can be blown up as in the Borcea-Voisin construction. The 
resulting threefold $X$ is not a Calabi-Yau manifold. Nevertheless, one can
remove an anti-canonical K3 divisor $D$ and there exists an asymptotically
cylindrical Ricci-flat K\"ahler metric on $X\setminus D$. In other words,
$X\setminus D$ is a non-compact Calabi-Yau manifold. The manifolds that 
are constructed by this method can be used as building blocks for compact 
$G_2$-manifolds. 

If we divide $S\times \mathbb{P}^1$ by $\rho\times\psi$ where
$\rho$ is of order $3$ and $\psi(z) := \exp{(-\tfrac{2\pi i}{3})} z$, the 
K3 divisor is not anti-canonical anymore and we cannot apply the 
method of \cite{KoLe}. Moreover, the singularities of the quotient are 
not Gorenstein. We are nevertheless able to construct 
a threefold ramified cover that is a compact Calabi-Yau orbifold
and thus carries a Ricci-flat K\"ahler orbifold metric. Our method
is motivated by an article of Cynk \cite{Cynk}
where Calabi-Yau manifolds that are ramified covers of 
Fano varieties are constructed. 

This article is organized as follows. In the next section, we introduce the
most important facts on non-symplectic automorphisms of order $3$ and
their classification. After that, we describe our construction in detail
and prove that it yields indeed a Calabi-Yau orbifold. In the fourth section, 
we compute the Euler characteristic of our examples.

\section{Non-symplectic automorphisms of order $3$}
\label{NonSymp}

A smooth compact complex surface $S$ with trivial canonical bundle 
and $\pi_1(S)=0$ is called a \emph{K3 surface}. The cohomology
group $H^2(S,\mathbb{Z})$ together with the intersection form is a
lattice that is isomorphic to $L:=H^3 \oplus (-E_8)^2$, where
$H$ is the hyperbolic plane lattice and $-E_8$ is the root lattice
with negative signature that corresponds to the root system $E_8$.
We call $L$ the \emph{K3 lattice}. A K3 surface together with a
lattice isometry $\phi:H^2(S,\mathbb{Z}) \rightarrow L$ is
called a \emph{marked K3 surface}. 

An automorphism $\rho:S\rightarrow S$ is called \emph{of order $3$} if 
$\rho\neq\text{Id}_S$ but $\rho^3=\text{Id}_S$. Moreover, it is called 
\emph{non-symplectic} if the pull-back $\rho^{\ast}$ acts on the one-dimensional 
complex space $H^{2,0}(S)$ as multiplication by $\zeta_3:=\exp{(\tfrac{2\pi i}{3})}$. 
The non-symplectic automorphisms of order $3$ are classified in \cite{ArSa,Taki}. 
We sum up these classification results and refer the reader to \cite{ArSa,Taki} for 
further information. 

The map $\phi\circ\rho^{\ast} \circ \phi^{-1}$ is an isometry of $L$ 
that we denote also by $\rho^{\ast}$. Let 

\begin{equation}
L^{\rho}:=\{x\in L | \rho^{\ast}(x)=x\}
\end{equation}

be the \emph{fixed lattice}. The \emph{discriminant group} of $L^{\rho}$ is
defined as $L^{\rho\ast}/L^{\rho}$ where $L^{\rho\ast} = 
\text{Hom}(L^{\rho},\mathbb{Z})$. It is a finite group that is isomorphic
to $\mathbb{Z}_3^a$ for an $a\in\mathbb{N}_0$. The rank of $L^{\rho}$ 
is even and we define $m:=\tfrac{1}{2} L^{\rho\perp}$. In \cite{ArSa,Taki} 
the following results are proven:

\begin{Th} Let $L^{\rho}$ be the fixed lattice of a K3 surface with
a non-symplectic automorphism of order $3$. The isomorphism
type of $L^{\rho}$ is determined by the invariants $(m,a)$ that are defined
above. 
\end{Th}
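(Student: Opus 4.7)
The plan is to apply Nikulin's theory of discriminant forms for even lattices, exploiting the fact that $L^{\rho}$ is a primitive sublattice of the unimodular K3 lattice $L$ and is $3$-elementary by hypothesis (its discriminant group is $\mathbb{Z}_3^a$).

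\emph{Signature.} Averaging any K\"ahler class over the cyclic group $\langle\rho\rangle$ yields a $\rho$-invariant K\"ahler class, so $L^{\rho}$ contains a vector of positive square. Since $\rho^{\ast}$ acts on $H^{2,0}(S)$ by $\zeta_3\ne 1$, the real $2$-plane $(H^{2,0}\oplus H^{0,2})\cap H^2(S,\mathbb{R})$ is contained in $L^{\rho\perp}\otimes\mathbb{R}$. Together with the signature $(3,19)$ of the K3 lattice, this pins down the signature of $L^{\rho}$ as $(1,21-2m)$ and that of $L^{\rho\perp}$ as $(2,2m-2)$.

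\emph{Discriminant form.} Because $L$ is unimodular, $q_{L^{\rho}}=-q_{L^{\rho\perp}}$, and both are nondegenerate quadratic forms on $\mathbb{F}_3^a$ with values in $\tfrac{1}{3}\mathbb{Z}/\mathbb{Z}$. For the odd prime $3$, such forms are classified up to isometry by the rank $a$ together with a sign $\varepsilon\in\{\pm 1\}$ (the class of the determinant in $\mathbb{F}_3^{\times}/(\mathbb{F}_3^{\times})^2$). Milgram's Gauss-sum formula
\[
\sum_{x\in D}\exp\bigl(2\pi i\, q(x)\bigr) \;=\; |D|^{1/2}\exp\bigl(2\pi i\,\sigma/8\bigr)
\]
links $\sigma(L^{\rho})\bmod 8$ to the sign $\varepsilon$. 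Since the signature is already determined by $m$, the pair $(m,a)$ determines $q_{L^{\rho}}$ completely.

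\emph{Uniqueness in the genus.} Nikulin's theorem on even indefinite lattices states that such a lattice is unique in its genus whenever its rank exceeds the length of the discriminant group by at least $2$. For $L^{\rho}$, this condition reads $22-2m\ge a+2$. Whenever it holds, the preceding two steps identify $L^{\rho}$ uniquely up to isometry, proving the theorem.

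The main obstacle is the handful of boundary pairs $(m,a)$ for which the inequality $22-2m\ge a+2$ fails and Nikulin's uniqueness theorem does not apply directly. These must be treated individually, by writing down explicit root-lattice models (typically orthogonal sums involving $U$, $E_8$, $E_6$, $A_2$ and rescalings thereof) and matching them with the admissible invariants; this case analysis, together with the verification that each surviving pair is actually realized by a K3 automorphism, constitutes the bulk of the arguments in \cite{ArSa,Taki}.
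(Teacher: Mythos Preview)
The paper does not give its own proof of this statement; it simply quotes the result from \cite{ArSa,Taki}. Your sketch is a faithful outline of the argument actually carried out in those references: determine the signature of $L^{\rho}$ from the Hodge decomposition and the existence of an invariant K\"ahler class, use Milgram's formula to fix the sign of the $3$-elementary discriminant form from the signature, and then invoke Nikulin's uniqueness-in-genus theorem for even indefinite lattices, handling the few pairs $(m,a)$ outside its hypotheses by exhibiting explicit root-lattice models. So there is no genuine comparison to make; your proposal and the cited sources follow the same route.
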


\begin{Th} 
\label{ClassThm}
Let $(m,a)$ be a pair of natural numbers from the following list:

\begin{center}
\begin{tabular}{lll}
$(1,1)$ & \qquad $(5,3)$ & \qquad $(7,5)$ \\
$(2,0)$ & \qquad $(5,5)$ & \qquad $(7,7)$ \\
$(2,2)$ & \qquad $(6,0)$ & \qquad $(8,2)$ \\
$(3,1)$ & \qquad $(6,2)$ & \qquad $(8,4)$ \\
$(3,3)$ & \qquad $(6,4)$ & \qquad $(9,1)$ \\
$(4,2)$ & \qquad $(6,6)$ & \qquad $(9,3)$ \\
$(4,4)$ & \qquad $(7,1)$ & \qquad $(10,0)$ \\
$(5,1)$ & \qquad $(7,3)$ & \qquad $(10,2)$ \\
\end{tabular}
\end{center}

Then there exists a K3 surface with a non-symplectic automorphism 
of order $3$ such that its fixed lattice has invariants $(m,a)$. Conversely, 
the invariants $(m,a)$ of any K3 surface with a non-symplectic automorphism
of order $3$ are contained in the above list.
\end{Th}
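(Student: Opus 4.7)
The plan is to translate the problem into one about even lattices equipped with an order-$3$ isometry, and then invoke the surjectivity of the period map together with the strong Torelli theorem for K3 surfaces. First I would record the numerical data forced on $L^{\rho}$ and $L^{\rho\perp}$: since $\rho^{\ast}$ acts on $H^{2,0}(S)$ as $\zeta_3$, the complex lines $H^{2,0}(S)$ and $H^{0,2}(S)$ are exactly the $\zeta_3$- and $\bar{\zeta}_3$-eigenspaces of $\rho^{\ast}$ on $L\otimes\mathbb{C}$, so the real two-plane $H^{2,0}(S)\oplus H^{0,2}(S)$ sits inside $L^{\rho\perp}\otimes\mathbb{R}$. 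Combined with the signature $(3,19)$ of $L$, this forces $L^{\rho\perp}$ to have signature $(2,2m-2)$ and $L^{\rho}$ to have signature $(1,21-2m)$, confirming in passing that $\operatorname{rank}(L^{\rho\perp})$ is even.

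For the necessity direction, I would exploit the fact that $\rho^{\ast}|_{L^{\rho\perp}}$ has only the primitive cube roots of unity as eigenvalues, so that $L^{\rho\perp}$ inherits the structure of a hermitian module over the Eisenstein ring $\mathbb{Z}[\zeta_3]$. Via Nikulin's theory of discriminant forms, the orthogonal direct sum $L^{\rho}\oplus L^{\rho\perp}$ glues to a unimodular lattice isometric to $L$ through an isotropic subgroup on which $\rho^{\ast}$ acts trivially, and the Eisenstein structure forces the discriminant group of $L^{\rho\perp}$ to be $3$-elementary of the same rank $a$ as that of $L^{\rho}$. Matching signatures, ranks, and $3$-adic discriminant data via the existence and uniqueness criteria for primitive embeddings into $L$ then carves out a finite list of admissible pairs $(m,a)$, which one verifies to coincide with the table.

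For the existence direction, starting from a pair $(m,a)$ in the list I would construct an abstract even lattice $L^{\rho}$ of signature $(1,21-2m)$ and discriminant group $\mathbb{Z}_3^{a}$, together with a hermitian Eisenstein lattice $L^{\rho\perp}$ of signature $(2,2m-2)$, and glue their discriminant forms to obtain an overlattice isometric to $L$. Extending multiplication by $\zeta_3$ on $L^{\rho\perp}$ by the identity on $L^{\rho}$ defines an isometry of $L$ of order $3$ whose fixed sublattice is $L^{\rho}$. Choosing a period point in the $\zeta_3$-eigenspace of this isometry on $L^{\rho\perp}\otimes\mathbb{C}$ and invoking the surjectivity of the period map produces a marked K3 surface $S$; the strong Torelli theorem then lifts the isometry to a non-symplectic holomorphic automorphism of $S$ of order $3$ with fixed lattice of invariants $(m,a)$.

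The hard step is clearly the necessity direction: for each numerically plausible $(m,a)$ one must verify whether a compatible gluing between $L^{\rho}$ and $L^{\rho\perp}$ actually exists, which amounts to a detailed $3$-adic case analysis using Nikulin's criteria applied simultaneously to $L^{\rho}$ and to the Eisenstein lattice $L^{\rho\perp}$. This is the technical heart of the arguments in \cite{ArSa,Taki}, and it is where the non-obvious parity-type constraints visible in the table actually get established.
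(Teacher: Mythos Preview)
The paper does not contain its own proof of this theorem: it is quoted from \cite{ArSa,Taki} as background (``We sum up these classification results and refer the reader to \cite{ArSa,Taki} for further information''), so there is no in-paper argument to compare against. Your sketch---recasting $L^{\rho\perp}$ as a hermitian $\mathbb{Z}[\zeta_3]$-lattice, gluing via Nikulin's discriminant-form machinery, and realizing the isometry geometrically through surjectivity of the period map and the strong Torelli theorem---is indeed the route taken in those references, and your identification of the $3$-adic case analysis as the technical core is accurate.
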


The authors of \cite{ArSa,Taki} show that the invariants $(m,a)$ 
determine the topological type of the fixed locus $S^{\rho}:=
\{p\in S | \rho(p)=p \}$.

\begin{Th} Let $S$ be a K3 surface with a non-symplectic automorphism
$\rho$ of order $3$. The fixed locus of $\rho$ is non-empty and the disjoint
union of  

\begin{itemize}
    \item three isolated points if $m=a=7$,

    \item $n$ isolated points, $k$ smooth rational curves, and one smooth
    curve of genus $g$ where  

    \begin{itemize}
        \item[$\ast$] $n=10-m$,
        \item[$\ast$] $k=6-\tfrac{m+a}{2}$,
        \item[$\ast$] $g = \tfrac{m-a}{2}$.
    \end{itemize}
\end{itemize}

If $p\in S$ is an isolated fixed point, there exist local holomorphic 
coordinates around $p$ such that the action of $\rho$ on the 
tangent space $T_p S$ is given by 

\[
\left(\,
\begin{array}{cc}
\zeta_3^2 & 0 \\
0 & \zeta_3^2 \\
\end{array}
\,\right)
\]

If $p$ lies on a curve in $S^{\rho}$, the action of $\rho$ on 
$T_p S$ is given by

\[
\left(\,
\begin{array}{cc}
\zeta_3 & 0 \\
0 & 1 \\
\end{array}
\,\right)
\]
\end{Th}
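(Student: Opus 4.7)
My plan is to first determine the local structure at fixed points by linearization together with the non-symplectic condition, and then extract global numerical invariants from two Lefschetz fixed-point formulas.

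Since $\rho$ has finite order, Cartan's linearization lemma gives local holomorphic coordinates $(z_1,z_2)$ at any $p \in S^\rho$ in which $\rho$ acts on $T_p S$ diagonally with eigenvalues $\zeta_3^i, \zeta_3^j$. Because $\rho^*$ acts on the canonical fibre $\wedge^2 T_p^* S$ by $\zeta_3$, the exponents must satisfy $i + j \equiv 1 \pmod 3$. Up to permutation of coordinates, the only solutions are $(i,j) = (1,0)$, giving the second matrix in the statement and placing $p$ on the smooth fixed curve $\{z_2 = 0\}$, and $(i,j) = (2,2)$, giving the first matrix and an isolated fixed point. Hence $S^\rho$ is a disjoint union of finitely many isolated points and finitely many smooth compact curves. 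Non-emptiness follows from the holomorphic Lefschetz formula applied to $\mathcal{O}_S$, whose left-hand side equals $1 + \zeta_3^2 = -\zeta_3 \neq 0$.

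The topological Lefschetz formula yields $\chi(S^\rho) = 24 - 3m$, since the invariant part of $H^2(S,\mathbb{Q})$ has rank $22 - 2m$ and its orthogonal $2m$-dimensional complement splits as $m$ copies of the two-dimensional fixed-point-free $\mathbb{Z}/3\mathbb{Z}$-representation of trace $-1$. If $S^\rho$ is the disjoint union of $n$ isolated points, $k$ rational curves, and curves $C_1,\ldots,C_r$ of genera $g_i \geq 1$, this reads $n + 2k + \sum_i (2-2g_i) = 24 - 3m$. The Atiyah--Bott holomorphic Lefschetz formula for $\mathcal{O}_S$ contributes $-\zeta_3/3$ per isolated point (since $\det(1 - \zeta_3^2\cdot\mathrm{Id}) = -3\zeta_3^2$) and $(1-g_C)\zeta_3/3$ per fixed curve $C$ (using $C^2 = 2g_C - 2$ on a K3 surface together with $1 + \zeta_3 + \zeta_3^2 = 0$). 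Equating with $-\zeta_3$ gives the additional identity $n = k + \sum_i (1-g_i) + 3$.

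The main obstacle is to deduce from these two scalar identities the more refined statement that there is generically exactly one higher-genus curve with $g = (m-a)/2$ and that $k = 6 - (m+a)/2$. The invariant $a$ records the three-torsion of the discriminant group $L^{\rho*}/L^\rho$ and is invisible to the rational Lefschetz traces. Following \cite{ArSa,Taki}, one analyses the integral classes of the fixed curves inside $L^\rho$, shows that the sublattice they span has discriminant controlled by $a$, and in particular that at most one fixed curve has positive genus. Combining this with the two Lefschetz identities yields the formulas for $n$, $k$, $g$. The exceptional case $m = a = 7$ corresponds to the degenerate configuration: the lattice analysis forces no fixed curves, and the topological identity $\chi(S^\rho) = 3$ then gives three isolated fixed points.
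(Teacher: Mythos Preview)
The paper does not prove this theorem; it is quoted from \cite{ArSa,Taki} as a known classification result, so there is no ``paper's own proof'' to compare against. Your sketch therefore goes beyond what the present paper itself does.

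As a proof outline your argument is sound and close in spirit to \cite{ArSa}. The local analysis via Cartan linearization and the constraint $i+j\equiv 1\pmod 3$ is correct (minor slip: with eigenvalues $(\zeta_3,1)$ on $(z_1,z_2)$ the fixed curve is $\{z_1=0\}$, not $\{z_2=0\}$). The two Lefschetz identities you extract are right, and once one knows there is at most one positive-genus fixed component they combine to give $n=10-m$ and then $k$ and $g$ in terms of the remaining unknown. You also correctly isolate the genuine gap: the rational Lefschetz data cannot see the discriminant invariant $a$, and the assertion that there is exactly one higher-genus curve with $g=(m-a)/2$ requires the additional input from \cite{ArSa,Taki}. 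In those references this step is carried out not quite as you describe (via the discriminant of the sublattice spanned by the fixed curves) but rather by analyzing the smooth rational surface $S/\langle\rho\rangle$, classifying the possible branch configurations on it, and matching these against the admissible fixed lattices. So your final paragraph points in the right direction but compresses a substantial geometric argument into a single sentence.
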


\section{The construction of the Calabi-Yau manifolds}

The aim of this section is to prove the following theorem. 

\begin{Th}
\label{MainThm} Let $S$ be a K3 surface that admits a non-symplectic
automorphism $\rho$ of order $3$. Moreover, let $\psi:\mathbb{P}^1
\rightarrow \mathbb{P}^1$ be defined by $\psi(z):= \zeta^2_3 z$, where
$z$ is the usual complex coordinate on the Riemann sphere. We 
construct a complex orbifold $X$ by carrying out the following three 
steps. 

\begin{enumerate}
    \item We divide $S\times \mathbb{P}^1$ by the cyclic group that is 
    generated by $\rho\times\psi$. The quotient shall be denoted 
    by $Z$ and the projection map by $p:S\times\mathbb{P}^1 
    \rightarrow Z$.

    \item Let $\phi:X_0\rightarrow Z$ be a crepant resolution of
    the singularities on $p(S\times \{0\})$. Moreover, let 
    $D_0:= \phi^{-1}(p(S\times\{z\}))$, where $z\in\mathbb{P}^1
    \setminus\{0,\infty\}$.

    \item Finally, let $\pi:X\rightarrow X_0$ be a suitable threefold
    ramified cover with branching locus $D_0$. 
\end{enumerate}

$X$ is a Calabi-Yau orbifold, i.e. $c_1(X)\in H^2(X,\mathbb{Q})$
vanishes and $X$ admits a Ricci-flat K\"ahler orbifold metric. 
\end{Th}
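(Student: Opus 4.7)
The plan is to track the canonical divisor through the three steps of the construction and then apply the orbifold version of Yau's theorem. First, the $3$-form $\omega\wedge dz$ on $S\times\mathbb{P}^1$ (with $\omega$ generating $H^{2,0}(S)$) is $(\rho\times\psi)$-invariant, since $\rho^*\omega=\zeta_3\omega$ by the non-symplectic assumption, $\psi^*dz=\zeta_3^2 dz$, and $\zeta_3\cdot\zeta_3^2=1$; its divisor is $-2[S\times\{\infty\}]$ from the double pole of $dz$ at infinity. It descends to a meromorphic $3$-form $\Omega_Z$ on the smooth locus of $Z$ with divisor $-2F_\infty$, where $F_\infty:=p(S\times\{\infty\})$. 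The fixed locus of $\rho\times\psi$ is $S^\rho\times\{0,\infty\}$: over $\{0\}$ the $\mathbb{P}^1$-direction transforms by $\zeta_3^2$, so combined with the local tangent action recalled in the previous section this gives Gorenstein singularities $\tfrac{1}{3}(1,1,1)$ at isolated points and $A_2\times(\mathrm{curve})$ along fixed curves; over $\{\infty\}$ it transforms by $\zeta_3$, giving the non-Gorenstein singularities $\tfrac{1}{3}(1,1,2)$ and $\mathbb{C}\times\tfrac{1}{3}(1,1)$. Step (2) applies the standard $\mathcal{O}_{\mathbb{P}^2}(-3)$ and minimal $A_2$ resolutions only over $F_0$. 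Since $\phi$ is crepant, $\phi^*\Omega_Z$ still has divisor $-2\tilde F_\infty$, so $K_{X_0}\sim-2\tilde F_\infty$ as $\mathbb{Q}$-Cartier divisors (with $3\tilde F_\infty$ Cartier although $\tilde F_\infty$ is not, near the remaining singularities).

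For step (3) I establish the linear equivalence $D_0\sim 3\tilde F_\infty$ on $X_0$. Let $\bar\pi\colon X_0\to\mathbb{P}^1$ be the composition of $\phi$ with $Z\to\mathbb{P}^1/\langle\psi\rangle\cong\mathbb{P}^1$, where the last map is $z\mapsto z^3$. This has ramification index $3$ at both $0$ and $\infty$, so $\bar\pi^*[\infty]=3\tilde F_\infty$ and $\bar\pi^*[c]=D_0$ for a generic $c$; combined with $[c]\sim[\infty]$ on $\mathbb{P}^1$, this gives the desired equivalence. Setting $\mathcal{L}:=\mathcal{O}_{X_0}(\tilde F_\infty)$, a reflexive rank-one sheaf with reflexive cube $\mathcal{L}^{[3]}\cong\mathcal{O}(D_0)$ a genuine line bundle, and choosing a defining section $s$ of $\mathcal{O}(D_0)$ with $(s)=D_0$, I construct $X$ as the orbifold cyclic triple cover locally determined by $t^3=s$. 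To verify $X$ is a complex orbifold one argues locally: near the smooth branch locus $D_0$ the cover is smooth by the standard analysis of a cyclic cover along a smooth divisor, while at a non-Gorenstein singularity of $X_0$ one passes to the $\mathbb{Z}_3$-chart upstairs, where $s$ is a nonvanishing $\mathbb{Z}_3$-invariant function and the $\mathbb{Z}_3$-equivariant triple cover $\{t^3=s\}$ descends to a quotient with at worst a cyclic quotient singularity.

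The Calabi-Yau property then follows from Riemann-Hurwitz: for the ramification divisor $R$ of $\pi$ one has $K_X=\pi^*K_{X_0}+2R$. The fibre coordinate $t$ is a section of $\pi^*\mathcal{L}$ with $\mathrm{div}(t)=R$, and $t^3=\pi^*s$ forces $3R=\pi^*D_0\sim 3\pi^*\tilde F_\infty$, so $R\sim\pi^*\tilde F_\infty$ in $\mathrm{Pic}(X)\otimes\mathbb{Q}$. Combining with $K_{X_0}\sim-2\tilde F_\infty$ yields $K_X\sim-2\pi^*\tilde F_\infty+2\pi^*\tilde F_\infty=0$, so $c_1(X)=0$ in $H^2(X,\mathbb{Q})$. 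Kählerness of $X$ is inherited from the projective $X_0$ through the finite morphism $\pi$, and the Ricci-flat Kähler orbifold metric then exists by the orbifold version of Yau's solution of the Calabi conjecture. The main obstacle I expect is the rigorous verification in step (3) that the orbifold triple cover is a well-defined complex orbifold with (at worst) quotient singularities; this hinges on a careful local analysis of the $\mathbb{Z}_3$-equivariant triple cover at the non-Gorenstein singularities of $X_0$, precisely where the cube-root sheaf $\mathcal{L}$ is only reflexive rather than locally free.
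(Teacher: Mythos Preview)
Your proposal is correct and follows the same overall strategy as the paper: track the canonical class through the three steps, exhibit a cube root $\mathcal{L}$ of $\mathcal{O}(D_0)$, build the cyclic triple cover, and finish with orbifold Yau. The bookkeeping differs in two equivalent ways. First, you use the \emph{directly} $(\rho\times\psi)$-invariant form $\omega\wedge dz$ with its double pole at $\infty$, obtaining $K_{X_0}\sim -2\tilde F_\infty$; the paper instead takes a form with pole along a generic fibre, symmetrises its cube, and obtains $\mathcal{K}_{X_0}^3\cong\mathcal{O}(D_0)^{-2}$. Second, you set $\mathcal{L}=\mathcal{O}(\tilde F_\infty)$ and prove $D_0\sim 3\tilde F_\infty$ via the projection to $\mathbb{P}^1/\langle\psi\rangle$, whereas the paper sets $\mathcal{L}=\mathcal{O}(D_0)\otimes\mathcal{K}_{X_0}$; since $\mathcal{O}(D_0)\otimes\mathcal{K}_{X_0}\cong\mathcal{O}(3\tilde F_\infty-2\tilde F_\infty)=\mathcal{O}(\tilde F_\infty)$ these are the same sheaf, and your Riemann--Hurwitz computation $K_X=\pi^*K_{X_0}+2R\sim 0$ is exactly the paper's formula $\mathcal{K}_X\cong\pi^*(\mathcal{K}_{X_0}\otimes\mathcal{L}^2)$. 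Your route has the small advantage that the invariant $3$-form is visible from the start, and the geometric identification of $\mathcal{L}$ with a fibre class is cleaner.

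The point you flag as the main obstacle---the local structure of the cover over the non-Gorenstein locus $R\subset X_0$---is precisely where the paper invests its effort. There the paper writes the cover in the orbifold chart $(U\times\mathbb{C})/\Gamma'$ explicitly, finds (in its formulation) self-intersections along the fixed set that it separates by a blow-up, and then argues that the trivialising section of $\mathcal{K}_X^3$ extends across $\pi^{-1}(R)$ by codimension. Your sketch (``the $\mathbb{Z}_3$-equivariant triple cover $\{t^3=s\}$ descends to a quotient with at worst a cyclic quotient singularity'') is on the right track: since $s$ is a nonvanishing $\Gamma$-invariant function and the fibre character of $\mathcal{L}$ is a primitive cube root of unity, the three sheets over $U$ are permuted cyclically by $\Gamma'$, so the quotient is irreducible; making this precise is exactly the missing step you anticipated. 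For the K\"ahler conclusion the paper quotes Joyce (crepant resolutions of K\"ahler orbifolds) and Abe--Jin--Shima (ramified covers of complete K\"ahler spaces), whereas you invoke projectivity of $X_0$; that is legitimate here because a K3 surface admitting a non-symplectic automorphism is algebraic, so $S\times\mathbb{P}^1$, $Z$, the blow-up $X_0$ and the finite cover $X$ are all projective.
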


Before we prove the above theorem, we describe our construction 
in more detail.  We call a singularity that looks locally as 
$\mathbb{C}^3/\Gamma$ where $\Gamma$ is generated by 

\begin{equation}
\left(\,
\begin{array}{ccc}
\zeta_3^a & 0 & 0 \\
0 & \zeta_3^b & 0 \\
0 & 0 & \zeta_3^c \\
\end{array}
\,\right)
\end{equation}

\emph{of type $\tfrac{1}{3}(a,b,c)$}. If the fixed locus of $\rho$ 
contains isolated points and curves, $Z$ has singularities of kind

\begin{equation}
\tfrac{1}{3}(2,2,2)\:, \quad
\tfrac{1}{3}(1,0,2)\:, \quad
\tfrac{1}{3}(2,2,1)\:, \quad
\tfrac{1}{3}(1,0,1)\:,
\end{equation}

since $\psi$ acts as multiplication with $\zeta_3^2$ on $T_0
\mathbb{P}^1$ and as multiplication with $\zeta_3$ on 
$T_{\infty} \mathbb{P}^1$. 

In the first two cases, we have $\Gamma\subset SL(3,\mathbb{C})$. 
Since we are in complex dimension $3$, a crepant resolution of
these singularities exists. It is known that there exists a unique 
crepant resolution of singularities of type $\tfrac{1}{3}(2,2,2)$,
see for example \cite[p. 496]{Roan}. Its exceptional divisor is 
$\mathbb{P}^2$. The singularities along the curves are 
ADE-singularities of type $A_2$ that can be blown up. The 
exceptional divisor is a bundle over the singular curves whose 
fibers are two $\mathbb{P}^1$s that intersect at a single point. 

Unfortunately, the singularities of type $\tfrac{1}{3}(2,2,1)$ and
$\tfrac{1}{3}(1,0,1)$ are not Gorenstein. Since the determinant
of the generator of the groups $\Gamma$ that correspond to these 
singularities is $\zeta_3^2$, the canonical bundle $\mathcal{K}_{X_0}$
is not a line bundle in the classical sense but has some singular fibers of
type $\mathbb{C}/\mathbb{Z}_3$. Nevertheless, $\mathcal{K}_{X_0}^3$
is a smooth line bundle since $\Gamma$ acts trivially on the
fibers of $\mathcal{K}^3_{S\times\mathbb{P}^1}$ at the fixed points. Since the 
third power of $\mathcal{K}_{X_0}$ is a line bundle, but not $\mathcal{K}_{X_0}$ itself, the
first Chern class of $X_0$ is an element of $H^2(X_0,\tfrac{1}{3}\mathbb{Z}) 
\subset H^2(X_0,\mathbb{Q})$ rather than $H^2(X_0,\mathbb{Z})$. 

The third step of our construction is a modification of the following lemma 
that can be found in Chapter I.17 of \cite{BHPV}. 

\begin{Le}
\label{CoveringLemma}
Let $M_0$ be a connected, complex variety and $D$ be a
reduced divisor of $M_0$, i.e.

\[
D = \sum_i D_i
\]

where the $D_i$ are irreducible subvarieties of codimension
one. We assume that there exists a line bundle $\mathcal{L}$ 
on $M_0$ such that

\begin{equation}
\label{BundleCondition}
\mathcal{L}^n = \mathcal{O}_{M_0}(D)\:.
\end{equation}

Then there exists an $n$-fold ramified covering $\pi:M\rightarrow M_0$
with branch-locus $D$. 
\end{Le}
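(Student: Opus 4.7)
The plan is to realize the cover $M$ as an analytic subvariety of the total space $T := \mathrm{Tot}(\mathcal{L})$, cut out by an equation that locally takes the form $t^n = s$. First I would use the hypothesis $\mathcal{L}^n \cong \mathcal{O}_{M_0}(D)$: the tautological section $1 \in H^0(M_0, \mathcal{O}_{M_0}(D))$, whose divisor of zeros is the reduced divisor $D$, corresponds under the given isomorphism to a section $s \in H^0(M_0, \mathcal{L}^n)$ whose zero divisor is exactly $D$. This $s$ is the branch data that defines the cover.

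Next, on $T$ with projection $q: T \rightarrow M_0$, there is a tautological section $\tau$ of $q^*\mathcal{L}$ sending a point of a fiber to itself, so $\tau^n - q^*s$ is a well-defined section of $q^*\mathcal{L}^n$. I would set
\[
M := \{x \in T : \tau^n(x) = (q^*s)(x)\}, \qquad \pi := q|_M.
\]
Over a local trivialization of $\mathcal{L}$ on $U \subset M_0$ with fiber coordinate $t$, $M|_U$ is the hypersurface $\{t^n = s\}$ in $U \times \mathbb{C}$, hence an analytic subvariety glued globally by the transition cocycle of $\mathcal{L}$.

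Then I would verify the covering properties. Over $p \in M_0 \setminus D$ the fiber of $\pi$ consists of the $n$ distinct $n$-th roots of $s(p) \neq 0$, so $\pi$ is étale of degree $n$ outside $D$. At a generic smooth point of $D$, $s$ vanishes to first order, so $t^n = s$ has a single root of multiplicity $n$, confirming that $D$ is precisely the branch divisor. The natural $\mathbb{Z}/n$-action on the fibers of $T$ preserves $M$ and makes $\pi$ a cyclic cover.

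The main obstacle is not this local construction, which is classical, but the \emph{modification} the author promises for step (3) of Theorem \ref{MainThm}: there $M_0 = X_0$ is an orbifold with non-Gorenstein quotient singularities, and the relevant $\mathcal{L}$ is a fractional object whose third power is a genuine line bundle built from $\mathcal{K}_{X_0}$. Extending the lemma to this setting requires checking that the cover, defined initially by $t^3 = s$ over the smooth locus and across the Gorenstein singularities, extends across the non-Gorenstein points so that $M$ acquires the structure of a complex orbifold rather than degenerating into a worse singularity; this local orbifold analysis, together with the verification that the third power of $\mathcal{K}_{X_0}$ does match the defining section of $D_0$, is where I would expect the real work to lie.
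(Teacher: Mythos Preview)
Your construction is exactly the paper's: defining $M$ inside the total space of $\mathcal{L}$ by the equation $\tau^n = q^*s$ is the same as taking $M = \alpha^{-1}(s(M_0))$ for the $n$-th power map $\alpha:\mathcal{L}\to\mathcal{L}^n$, $v\mapsto v^{\otimes n}$, which is how the paper phrases it. Your added local verification and the remark about the orbifold modification go beyond what the paper proves in the lemma itself, but they are consistent with how the paper later adapts the construction in the proof of Theorem~\ref{MainThm}.
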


\begin{proof}
There exists a global holomorphic section $s$ of $\mathcal{L}^n$ that 
has zeroes of order $1$ at the $D_i$ and is non-zero outside of the 
$D_i$. Let $\alpha: \mathcal{L}\rightarrow \mathcal{L}^n$ be the map with 
$\alpha(v) := v^{\otimes n}$. $M$ can be defined as the subspace
$\alpha^{-1}(s(M_0)) \subseteq \mathcal{L}$ and $\pi: M\rightarrow M_0$ 
is the restriction of the projection map $\mathcal{L} \rightarrow M_0$. 
\end{proof}

\begin{Rem}
The ramified cover from the above lemma is not ne\-cessarily 
unique since there exist several line bundles satisfying 
(\ref{BundleCondition}) if $H^2(M_0,\mathbb{Z})$ has $n$-torsion.
\end{Rem}

We are now able to prove Theorem \ref{MainThm}.

\begin{proof}
The main part of the proof is to show that $\mathcal{K}^3_X$ is trivial. Let 
$z\in\mathbb{P}^1\setminus \{0,\infty\}$ be arbitrary. 
$\mathcal{O}_{\mathbb{P}^1}(\{z\})$ is the hyperplane line 
bundle $H$. The canonical bundle $\mathcal{K}_{\mathbb{P}^1}$ is 
isomorphic to $H^{-2}$. By extending the transition functions of 
$H$ canonically from $\mathbb{P}^1$ to $S\times \mathbb{P}^1$
we obtain a line bundle $H'$ on $S\times \mathbb{P}^1$. Since 
$S$ has vanishing first Chern class, the canonical bundle of 
$S\times\mathbb{P}^1$ is  $H'^{-2} \cong
\mathcal{O}_{S\times \mathbb{P}^1}(S\times\{z\})^{-2}$. 

There exists a $(3,0)$-form $\eta$ on $S\times \mathbb{P}^1$ with
a pole of second order along $S\times\{z\}$. $\eta\otimes
(\rho\times\psi)^{\ast}\eta \otimes (\rho\times\psi)^{2\ast}\eta$
is a $\rho\times\psi$-invariant section of $\mathcal{K}^3_{S\times\mathbb{P}^1}$
with poles of second order along $(S\times\{z\}) \cup 
(S\times \{\zeta_3\cdot z\}) \cup (S\times \{\zeta_3^2\cdot z\})$. 
Therefore, there exists a section $\alpha$ of $\mathcal{K}^3_Z$ with $p^{\ast}\alpha
= \eta\otimes (\rho\times\psi)^{\ast}\eta \otimes 
(\rho\times\psi)^{2\ast}\eta$. $\alpha$ has a pole of second order
along $p(S\times \{z\})$ and thus we have

\begin{equation}
\mathcal{K}_Z^3 \cong \mathcal{O}_Z(p(S\times \{z\}))^{-2}\:.
\end{equation}

Since $\phi$ is a crepant resolution, we also have

\begin{equation}
\label{CanBundleX0}
\mathcal{K}_{X_0}^3 \cong \mathcal{O}_{X_0}(D_0)^{-2}\:.
\end{equation}

We define $\mathcal{L}:=\mathcal{O}_{X_0}(D_0) \otimes 
\mathcal{K}_{X_0}$. $\mathcal{O}_{X_0}(D_0)$ is a line bundle
but $\mathcal{K}_{X_0}$ has exceptional fibers at the singular locus
$R$ of $X_0$. It follows from equation (\ref{CanBundleX0}) that
$\mathcal{L}^3 \cong \mathcal{O}_{X_0}(D_0)$. We use the 
construction from the proof of Lemma \ref{CoveringLemma} to 
define a map $\pi:X \rightarrow X_0$. Since the restriction of
$\mathcal{L}$ to $X_0\setminus R$ is a line bundle, the map 
$\pi' := \pi|_{X\setminus \pi^{-1}(R)} : X\setminus \pi^{-1}(R) 
\rightarrow X_0\setminus R$ is a threefold ramified cover. 

At a point $p\in R$ the situation is more complicated. Let $\Gamma$
be the group that is generated either by $(\zeta_3^2,\zeta_3^2,
\zeta_3)$ or by $(\zeta_3,1,\zeta_3)$ and let $U$ be a 
$\Gamma$-invariant neighborhood of $0\in\mathbb{C}^3$. Near
$p$ the orbifold $X_0$ looks like $U/\Gamma$. The total space of the
line bundle $\mathcal{L}$ can be identified with $(U\times\mathbb{C})/
\Gamma'$ where $\Gamma'$ is generated either by 
$(\zeta_3^2,\zeta_3^2,\zeta_3,\zeta_3)$ or by 
$(\zeta_3,1,\zeta_3,\zeta_3)$. Since $\mathcal{L}^3$ is
an ordinary line bundle, it can be described by $(U\times\mathbb{C})/
\Gamma''$ where the generator of $\Gamma''$ is either 
$(\zeta_3^2,\zeta_3^2,\zeta_3,1)$ or by $(\zeta_3,1,\zeta_3,1)$.

The map $\alpha:\mathcal{L}\rightarrow\mathcal{L}^3$ can be
written with respect to the coordinates on $U\times\mathbb{C}$
as

\[
\alpha((z_1,z_2,z_3,z_4)\Gamma') = (z_1,z_2,z_3,z_4^3)\Gamma''
\] 

Let $s$ be a section of $\mathcal{L}^3$ that has a zero of order one 
along $D_0$ and is non-zero elsewhere. In particular, $s$ is non-zero 
on the neighborhood that we have identified with $U/\Gamma$. In this 
picture, the variety $X$ is locally given by 

\[
\{(z,\sqrt[3]{s(z)})\Gamma' | z\in U \} \cup
\{(z,\zeta_3\sqrt{s(z)})\Gamma' | z\in U \} \cup
\{(z,\zeta_3^2\sqrt{s(z)})\Gamma' | z\in U \}
\]

Geometrically this is the union of three copies of $U/\Gamma$
that intersect in 

\[
\{(z,\sqrt[3]{s(z)})\Gamma' | z\in F \}
\]

where $F$ is the fixed point set of $\Gamma$. $F$ is an isolated point
if $\Gamma$ is generated by $(\zeta_3^2,\zeta_3^2,\zeta_3)$ and a 
line in the other case.

Our considerations show that $X$ has self-intersections along a
set that is biholomorphic to $R$. The self-intersections can be 
removed by blowing up the self-intersection locus in the 
total space of $\mathcal{L}$. For reasons of convenience, we denote
the blown up space also by $X$ and the composition of the blow-up
map and $\pi:X\rightarrow X_0$ also by $\pi$. After this final step,
$X$ is an orbifold with singularities of type $\tfrac{1}{3}(2,2,1)$ or 
$\tfrac{1}{3}(1,0,1)$. 

In the situation of Lemma \ref{CoveringLemma} we have (see Chapter 
I.17 of \cite{BHPV})

\begin{equation}
\mathcal{K}_M \cong \pi^{\ast}(\mathcal{K}_{M_0} \otimes \mathcal{L}^{n-1})\:.
\end{equation}

If we were in that situation, we could conclude that

\begin{equation}
\label{TrivBundle1}
\mathcal{K}_X \cong \pi^{\ast}(\mathcal{K}_{X_0} \otimes \mathcal{L}^2) \cong
\pi^{\ast}(\mathcal{K}_{X_0}^3 \otimes \mathcal{O}_{X_0}(D_0)^2) \cong 
\mathcal{O}_X\:.
\end{equation}

Since $\mathcal{L}$ has exceptional fibers, we have to prove that 
the canonical bundle is also trivial in our situation. Let $\beta$ be a 
section of $\mathcal{K}_{X_0}^3$ with a pole of second order along 
$D_0$. By a short calculation in local complex coordinates, we can show 
that in accordance with (\ref{TrivBundle1}) the pull-back $\pi'^{\ast}\beta$ 
is a nowhere vanishing holomorphic section $\gamma$ of 
$\mathcal{K}^3_{X\setminus \pi^{-1}(R)}$. We obtain no additional 
zeroes or poles if we extend $\gamma$ meromorphically
to $\pi^{-1}(R)$. The reason for this is that $\pi^{-1}(R)$ is the disjoint
union of subvarieties of codimension $\geq 2$ and that the zero set
of a holomorphic section is a subvariety of codimension $1$.
All in all, we have proven that $\gamma$ has neither zeroes nor
poles and therefore $\mathcal{K}_X^3$ is trivial. 

Finally, we have to prove that $X$ is K\"ahler. Let $g$ be a K\"ahler metric on 
$S\times\mathbb{P}^1$. $g + (\rho\times\psi)^{\ast}g + 
(\rho\times\psi)^{2\ast} g$ is a $\rho\times\psi$-invariant K\"ahler metric on
$S\times\mathbb{P}^1$. $Z$ therefore carries an orbifold K\"ahler
metric. Among the crepant resolutions of a K\"ahler orbifold there exists at
least one that admits a K\"ahler metric \cite[p. 137]{Joyce}. Since
our resolution $\phi$ is unique, $X_0$ thus has to be K\"ahler. It is known
\cite{Abe} that a ramified cover of a complete K\"ahler space also admits a 
K\"ahler metric. Therefore, $X$ is K\"ahler, too. Since the Calabi
conjecture is also true for orbifolds \cite[p. 135]{Joyce}, $X$ admits a 
Ricci-flat K\"ahler orbifold metric.
\end{proof}

\begin{Rem}
If we had divided $S\times\mathbb{P}^1$ by an automorphism of
order $p$ and had defined $\mathcal{L}$ as $[D_0]\otimes \mathcal{K}_{X_0}$,
we would have $\mathcal{L}^p = [D_0]^{p-2}$. Since we need
$\mathcal{L}^p = [D_0]$ to construct a $p$-fold ramified cover,
the condition $p=3$ in our theorem is necessary.
\end{Rem}

\section{The Euler characteristic}

We go through each step of our construction and determine the
Euler characteristic of the intermediate threefolds. In the end, we 
obtain the Euler characteristic of our Calabi-Yau orbifolds. Since
$\chi(S)=24$ for any K3 surface $S$, we have

\begin{equation}
\chi(S\times \mathbb{P}^1) = 48 
\end{equation}

It is well-known that the Euler characteristic of a quotient of a manifold 
$M$ by a finite group $G$ is given by

\begin{equation}
\label{EulerQuotient}
\chi(M/G) = \frac{1}{|G|} \sum_{g\in G} \chi(M^g)\:,
\end{equation}

where $|G|$ is the cardinality of $G$ and $M^g$ is the fixed point set of 
the action of $g\in G$ on $M$. Since $\psi$ has exactly two fixed points
and the Euler characteristic of the fixed point set of $\rho$ is $24 - 3m$
\cite{ArSa}, we obtain 

\begin{equation}
\begin{aligned}
\chi(Z) & = \frac{1}{3}\left(\chi(S\times\mathbb{P}^1) + 
2\chi\left((S\times\mathbb{P}^1)^{\rho\times\psi} \right) \right) \\
& = \frac{1}{3} \left(48 + 4(24 - 3m) \right) \\
& = 48 - m
\end{aligned}
\end{equation}

Let $Y$ be a variety and let $f:\widetilde{Y} \rightarrow Y$
be a resolution of a singularity along a subvariety $W$ of $Y$.
The exceptional divisor we denote by $E$. We have

\begin{equation}
\chi(\widetilde{Y}) = \chi(Y) + \chi(E) - \chi(W)\:.
\end{equation}  

As we have mentioned in the previous section, the exceptional divisor
of the crepant resolution of the isolated singular points of $Z$ is 
$\mathbb{P}^2$. Its Euler characteristic is $3$. The crepant resolution of 
the singular curves replaces the singular curves by a bundle over those curves
whose fibers are two intersecting $\mathbb{P}^1$s. The Euler characteristic 
of a fiber is again $3$. Since we resolve the singularities at $n$ isolated points
and along $k$ rational curves and one curve of genus $g$, we have 

\begin{equation}
\begin{aligned}
\chi(X_0) & = \chi(Z) + 3n + 3(2k + 2 - 2g) - n - (2k + 2 - 2g)\\
& = 48 - m + 2(n + 2k + 2 - 2g) \\
& = 48 - m + 2(24 - 3m) \\
& = 96 - 7m 
\end{aligned}
\end{equation}

Finally, we are able to use (\ref{EulerQuotient}) again and obtain

\begin{equation}
\chi(X) = 3 \chi(X_0) - 2\chi(D_0) = 240 - 21m 
\end{equation}

We go through the table from Theorem \ref{ClassThm} and obtain the
following numerical values:

\begin{center}
\begin{tabular}{l|c}
$\quad (m,a)\qquad$ & $\qquad \chi(X)\qquad$ \\

\hline

\quad (1,1) & 219 \\

\quad (2,0),(2,2) & 198 \\ 

\quad (3,1),(3,3) & 177 \\ 

\quad (4,2),(4,4) & 156 \\

\quad (5,1),(5,3),(5,5) & 135 \\ 

\quad (6,0),(6,2),(6,4),(6,6) & 114 \\

\quad (7,1),(7,3),(7,5),(7,7) & 93 \\

\quad (8,2),(8,4) & 72 \\

\quad (9,1),(9,3) & 51 \\

\quad (10,0),(10,2) & 30 \\
\end{tabular}
\end{center}

\end{document}